\def\NZQ{\mathbb}               
\def\NN{{\NZQ N}}
\def\RR{{\NZQ R}}
\def\mm{{\mathfrak m}}
\def\a{{\mathbf a}}
\def\b{{\mathbf b}}
\def\1{{\mathbf 1}}
\def\0{{\mathbf 0}}
\def\opn#1#2{\def#1{\operatorname{#2}}} 
\opn\diam{diam} 
\opn\depth{depth}
\opn\ini{in}
\opn\gcd{gcd}
\newtheorem{Theorem}{Theorem}[section]
\newtheorem{Lemma}[Theorem]{Lemma}
\newtheorem{Corollary}[Theorem]{Corollary}
\newtheorem{Proposition}[Theorem]{Proposition}
\newtheorem{Remark}[Theorem]{Remark}
\begin{document} 

\title{The Bhattacharya function of \\ complete monomial ideals  
in two variables}
\author{Hong Ngoc Binh}
\address{Institute of Mathematics, Vietnam Academy of Science and Technology, 18 Hoang Quoc Viet, Hanoi, Vietnam}
\email{binh.hongngoc@gmail.com}

\author{Ngo Viet Trung}
\address{Institute of Mathematics, Vietnam Academy of Science and Technology, 18 Hoang Quoc Viet, Hanoi, Vietnam}
\email{nvtrung@math.ac.vn}

\subjclass[2000]{13B22, 13H15}
\keywords{Bhattacharya function, mixed multiplicities, complete monomial ideal, simple ideal, Newton polyhedron, lattice point, 
Minkowski sum of polygons}

\begin{abstract}
We give explicit formulas for the Bhattacharya function of $\mm$-primary complete monomial ideals in two variables in terms of the vertices of the Newton polyhedra or in terms of the decompositions of the ideals as products of simple ideals.
\end{abstract}

\thanks{The second author is supported by the National Foundation of  Science and Technology Development.} 
\maketitle

\section*{Introduction}

Let $(R,\mm)$ be a $d$-dimensional local ring. For two $\mm$-primary ideals $I,J$ of $R$, 
one can consider the length function $\ell(R/I^mJ^n)$, $m,n \ge 0$.
This function was studied first by Bhattacharya \cite{Bh} who proved that this function is a polynomial $P(m,n)$ of total degree $d$ 
for $m, n \gg 0$. One calls $\ell(R/I^mJ^n)$ and $P(m,n)$ the Bhattacharya function and the  Bhattacharya polynomial of $I,J$. 
If we write
$$P(m,n) = \sum_{0 \le i,j \le d} e_{i,j}{m \choose i}{n \choose j},$$
then the numbers $e_{i,j}$ with $i + j = d$ are called the mixed multiplicities of $I,J$.
In general, it is very difficult to compute the Bhattacharya polynomial or the mixed multiplicities of a given pair of ideals $I,J$. \par

If $I$ and $J$ are $\mm$-primary monomial ideals of a polynomial ring $R$ over a field, where $\mm$ is the maximal homogeneous ideal,
Teissier \cite[Appendix]{Te} showed that the mixed multiplicities can be interpreted in terms of the volumes of the complements of the Newton polyhedra of $I$, and $J$. Recall that the Newton polyhedron of a monomial ideal is the convex hull of the exponent vectors of the monomials of the ideal. But nothing is known about the other coefficients of the Bhattacharya polynomial in this general setting. 
\par

The only case where one can compute the Bhattacharya polynomial is the case $(R,\mm)$ is a two-dimensional regular local ring and $I,J$ are $\mm$-primary complete ideals. Recall that an ideal is called complete if it is integrally closed. Using joint reductions of complete ideals and Lipman's formula for mixed multiplicities, Verma \cite{Ve} showed that the Bhattacharya function coincides with the Bhattacharya polynomial and that its coefficients  can be expressed in terms of the multiplicities and the lengths of $R/I$, $R/J$, and $R/IJ$. \par

In this paper we will compute the Bhattacharya polynomial in the case $R$ is a polynomial ring in two variables and $I,J$ are $\mm$-primary complete monomial ideals. Our results show that the coefficients of the Bhattacharya polynomial can be explicitly expressed in terms of the vertices of Newton polyhedra of $I$ and $J$ or in terms of the decompositions of $I,J$ as products of simple ideals. Recall that a complete ideals is called simple if it is not the product of two proper complete ideals. \par

Our approach is based on a result of Crispin Quinonez \cite{Cr} who described the decomposition of an $\mm$-primary complete monomial ideal as a product of simple ideals in terms of the vertices of the Newton polyhedron. The existence of such a decomposition follows from Zariski's theory of complete ideals in a two-dimensional regular local ring \cite[Appendix 5]{Za}. The description of such a decomposition allows us to study the combinatorial properties of the product of two complete monomial ideals. 
We will reduce the problem of computing the Bhattacharya function to the problem of counting lattice points of the complement of the Newton polyhedron and we will use Pick's theorem to relate the number of lattice points to the area of the corresponding polygon.
If we know the decomposition of the ideal as a product of simple ideals, we can compute the area of such a polygon by the theory of Minkowski sum and mixed areas.  \par

Our results do not have any theoretical contribution to the theory of Bhattacharya function and mixed multiplicities.
However, we feel that combinatorial formulas for the Bhattacharya function of complete monomial ideals in two variables would be useful for the study of mixed multiplicities and similar functions of monomial ideals in several variables. 
For instance, if $I$ is an $\mm$-primary monomial ideal and $J$ an arbitrary monomial ideal, one can use the function $\ell(I^nJ^m/I^{n+1}J^m)$ to define mixed multiplicities of $I,J$ \cite{Bh}, \cite{KaV}, \cite{TrV2}. So far, no combinatorial formula for these mixed multiplicities is known except the case $I = \mm$ and $J$ is generated by monomials of the same degree \cite{TrV}. Our results may give some hints for such a formula
since the computation of the function $\ell(I^nJ^m/I^{n+1}J^m)$ can be reduced to the computation of a Bhattacharya function in the two variables case.
\par

The paper is organized as follows. In Section 1 we describe the decomposition of complete monomial ideals.
In Section 2 we show how to compute the colength of an $\mm$-primary complete monomial ideal. 
Explicit formulas for the Bhattacharya function and their consequences are given in Section 3.\par

\noindent{\em Acknowledgement}. 
The authors would like to thank the referee for his suggestions which have helped to improve the presesentation of the paper.

\section{Decomposition of complete ideals}

Let $R$ be a Noetherian ring and $I$ an ideal of $R$.
An element $x \in R$ is said to be {\em integral} over $I$ if it satisfies an equation of the form
$$x^n + c_1x^{n-1} + \cdots + c_n = 0$$
where $c_j \in I^j$ for $j = 1,...,n$.
It is known that the set of all integral elements over $I$ form an ideal of $R$.
This ideal is called the {\em integral closure} of $I$, denoted by $\overline I$.
If $\overline I = I$, then $I$ is called an {\em integrally closed} or {\em complete} ideal. 
For more information on integrally closures of ideals we refer to the books \cite{HuS} and \cite{Va}. \par

The complete ideals behave especially well if $R$ is a two-dimensional regular local ring.
In this case we know by Zariski \cite[Appendix 5]{Za} 
that the product of two complete ideals is complete and  
every complete ideal can be uniquely written as a product of simple complete ideals,
where a complete ideal is {\em simple} if it is not the product of two complete ideals.
\par

If $I$ is a monomial ideal in a polynomial ring $R = k[x_1,...,x_n]$, 
the integral closure $\overline I$ can be described combinatorially as follows.
Let $N(I)$ denote the {\em Newton polyhedron} of $I$, that is the convex hull of the exponent vectors of the monomials of $I$ in $\RR^n$.
Then $\overline I$ is the ideal generated by all monomials whose exponent vectors belong to $N(I)$ (see \cite{HuS} or \cite{Va}).
Let $B(I)$ denote the union of the compact faces of $N(I)$.
If we define a partial order on $\RR^n$ by the rule $\a \le \b$ if each component of $\a$ is less or equal than the corresponding component of $\b$, then $N(I)$ is the set of all points which are greater or equal the points of $B(I)$.
So $I$ is completely determined by $B(I)$.
We call $B(I)$ the {\em Newton boundary} of $I$. 

\begin{figure}[ht!]
\begin{tikzpicture}[scale=0.8]
\draw [<->,thick] (0,3.7) node (yaxis) [above] {}
        |- (4.5,0) node (xaxis) [right] {};
    
\draw [thick] (0,2.8) coordinate (a) -- (0.5,1.6) coordinate (b) ;
\draw [thick] (0.5,1.6) coordinate (b) -- (1,1) coordinate (c) ;
\draw [thick] (1,1) coordinate (c) -- (1.7,0.6) coordinate (d) ; 
\draw [thick] (1.7,0.6) coordinate (d) -- (3.5,0) coordinate (e);

\draw (0.8,1) node[below] {$B(I)$};
\draw (1.5,2) node[above,right] {$N(I)$};
     
\fill (a) circle (2pt);
    \fill (b) circle (2pt);
  \fill (c) circle (2pt);
  \fill (d) circle (2pt);
  \fill (e) circle (2pt);
\end{tikzpicture}
 \end{figure}

If $R = k[x,y]$ is a polynomial ring in two variables, the homogeneous version of Zariski's decomposition  theorem implies that 
the product of two complete homogeneous ideals in $R$ is complete and that
every complete homogeneous ideal can be uniquely written as a product of simple homogeneous ideals. 
Let $I$ be a complete monomial ideal in $R = k[x,y]$. 
One may ask whether there is a description of the simple homogeneous ideals of $I$ in terms of $B(I)$.
\par

The answer is yes and is due to Crispin Quinonez \cite{Cr}. 
She calls the integral closure of a complete intersection ideal $(x^p,y^q)$ with $\gcd(p,q) = 1$ a {\em block ideal}
and proved that a block ideal is not the product of two complete monomial ideals \cite[Proposition 3.4]{Cr}.
Actually, one can prove more.

\begin{Proposition} \label{simple} 
A block ideal is simple.
\end{Proposition}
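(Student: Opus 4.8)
The plan is to upgrade the cited result of Crispin Quinonez---that a block ideal $B = \overline{(x^p,y^q)}$ with $\gcd(p,q)=1$ is not the product of two complete monomial ideals---to the statement that $B$ is not the product of any two proper complete ideals. The whole point is that a factorization of $B$ into complete ideals cannot escape the monomial world: if the simple factors of $B$ in Zariski's unique factorization are forced to be monomial, then any nontrivial factorization would exhibit $B$ as a product of two complete monomial ideals, contradicting \cite[Proposition 3.4]{Cr}. So I would argue by contradiction. Write $B = P_1^{a_1}\cdots P_r^{a_r}$ as the unique product of distinct simple homogeneous complete ideals furnished by the homogeneous version of Zariski's theorem, and suppose $\sum_k a_k \ge 2$.

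The key tool is the torus symmetry of $B$. For $(\lambda,\mu) \in (k^*)^2$, the graded $k$-algebra automorphism $\phi_{\lambda,\mu}$ of $R=k[x,y]$ sending $x \mapsto \lambda x$ and $y \mapsto \mu y$ has the feature that an ideal is monomial precisely when it is fixed by every $\phi_{\lambda,\mu}$. Since $B$ is monomial, $\phi_{\lambda,\mu}(B)=B$ for all $(\lambda,\mu)$. Because $\phi_{\lambda,\mu}$ preserves integral closedness and carries a product decomposition to a product decomposition, it sends simple complete ideals to simple complete ideals; applying it to the factorization and invoking the uniqueness in Zariski's theorem, $\phi_{\lambda,\mu}$ must permute the finite set $\{P_1,\dots,P_r\}$, respecting the multiplicities $a_k$.

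First I would show this permutation is trivial, so that each $P_k$ is fixed by the whole torus and is therefore monomial. It suffices to treat the one--parameter subgroup $\psi_\lambda\colon x \mapsto \lambda x,\ y \mapsto y$ and its analogue in $y$. Since each $P_k$ is $\mm$-primary, membership of a polynomial in the fixed ideal $P_{k'}$ is a closed condition on its coefficients, so for each fixed $k$ the loci $\{\lambda \in k^* : \psi_\lambda(P_k)=P_{k'}\}$, for $k'=1,\dots,r$, are closed and partition $k^*$. As $k^*$ is irreducible, each such locus is both open and closed, and the one containing $\lambda=1$---namely $\{\lambda : \psi_\lambda(P_k)=P_k\}$---must be all of $k^*$. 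Thus $\psi_\lambda(P_k)=P_k$ for every $\lambda$, and likewise for the $y$-scaling, so every $P_k$ is monomial.

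With each $P_k$ monomial, the assumption $\sum_k a_k \ge 2$ lets me split off one factor and write $B = P_1 \cdot \bigl(P_1^{a_1-1}P_2^{a_2}\cdots P_r^{a_r}\bigr)$ as a product of two proper complete monomial ideals, contradicting \cite[Proposition 3.4]{Cr}. Hence $\sum_k a_k = 1$, so $B$ is a single simple complete ideal, i.e.\ $B$ is simple. The main obstacle I anticipate is this connectedness step: one must argue that the torus acts on the finite set of simple factors through a genuinely algebraic, hence locally constant, rule on the irreducible group $k^*$, which is exactly what the closedness of the incidence loci $\{\psi_\lambda(P_k)=P_{k'}\}$ provides. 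The remaining verifications---that $\phi_{\lambda,\mu}$ preserves simplicity and completeness, and that $\mm$-primaryness makes ideal membership a closed condition---are routine.
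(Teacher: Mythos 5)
Your strategy is genuinely different from the paper's, and it is workable in spirit, but as written it has a real gap: the entire torus argument silently assumes the ground field $k$ is infinite. Both of its pillars fail over a finite field: an ideal fixed by every $\phi_{\lambda,\mu}$ need not be monomial when $k^*$ is too small to run the specialization/Vandermonde argument (over $\FF_2$ every ideal is torus-fixed), and $k^*$ is then a finite discrete set, so the irreducibility step that forces $\psi_\lambda(P_k)=P_k$ collapses. The paper makes no hypothesis on $k$, and its own proof is field-independent. Your argument is patchable --- extend scalars to $\overline{k}$, note that $\overline{(x^p,y^q)}$ is cut out by the Newton polyhedron and hence stable under base change, and check that a factorization $B=I_1I_2$ into proper complete ideals induces one over $\overline{k}$ --- but that patch has to be supplied, and the behaviour of integral closure under a (possibly inseparable) field extension is not free.

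Beyond that, your route is much heavier than the paper's and inverts its logical economy: you lean on the homogeneous Zariski unique factorization (with ``simple'' read in the absolute sense of not being a product of two proper complete ideals, which is the reading your case $\sum_k a_k=1$ requires) together with \cite[Proposition 3.4]{Cr} as a black box. The paper instead argues directly and elementarily: give $R$ the weighted grading $\deg x=q$, $\deg y=p$, observe that $x^p$ and $y^q$ are the only monomials of $I=\overline{(x^p,y^q)}$ of the minimal weighted degree $pq$ (this is where $\gcd(p,q)=1$ enters), and show that in any factorization $I=I_1I_2$ into proper complete ideals the minimal initial forms multiply to a single monomial, say $x^p$, which forces every product $g_1g_2$ to have initial form $x^p$ or weighted degree greater than $pq$ --- excluding $y^q\in I$. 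That argument needs neither Zariski's theorem nor Crispin Quinonez's result (indeed it reproves the latter in stronger form) and works over any field. If you keep your approach, you must (i) either restrict to infinite $k$ or justify the base change, and (ii) be explicit that the uniqueness statement you invoke produces factors that are simple in the absolute, not merely homogeneous or monomial, sense.
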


\begin{proof}
Let $I= \overline{(x^p,y^q)}$ with $\gcd(p,q) = 1$.
Then $B(I)$ is the line segment connecting the points $(p,0)$ and $(0,q)$ and they are the only lattice points of $B(I)$.
Since $mq + np = pq$ is the equation of the supporting line of this segment, 
all lattice points $(m,n) \in N(I)$ satisfy the condition $mq + np \ge pq$. 
Consider $R$ as a weighted graded ring with $\deg x = q, \deg y = p$.
By the above description of $N(I)$, all monomials of $I$ have degree $\ge pq$ 
and $x^p, y^q$ are the only monomials having degree $pq$. \par

Assume that $I$ is the product of two complete ideals $I_1,I_2$.
For every polynomial $f$ we denote by $\ini(f)$ the initial monomial of $f$ with respect to the weighted degree.
Let $f_1 \in I_1$ and $f_2 \in I_2$ such that $\deg(\ini(f_1))$ resp. $\deg(\ini(f_2))$ are minimal among 
the degree of the initial terms of the polynomials of $I_1$ resp. $I_2$.
Since $f_1f_2 \in I$ and $I$ is a monomial ideal, $\ini(f_1)\ini(f_2) \in I$ and $\deg(\ini(f_1)\ini(f_2)) = pq$, 
the smallest degree of the monomials of $I$.
Therefore, $\ini(f_1)\ini(f_2)$ equals $x^p$ or $y^q$.
Without restriction we may assume that $\ini(f_1)\ini(f_2) = x^p$.
Then $\ini(f_1) = x^a$ and $\ini(f_2) = x^b$ for some fixed non-negative integers $a,b$ with $a+b =p$.
From this it follows that  for all $g_1 \in I_1$ resp. $g_2 \in I_2$, we either have $\ini(g_1) = x^a$ or $\deg(\ini(g_1)) > \deg(\ini(f_1)) = aq$ resp. $\ini(g_2) = x^b$ or $\deg(\ini(g_2)) > \deg(\ini(f_2)) = bq$. 
Thus, $\ini(g_1g_2) = x^ax^b = x^p$ or $\deg(\ini(g_1g_2)) > \deg(\ini(f_1)) + \deg(\ini(f_2)) = pq$.
Since $\deg(y^q) = pq$, this implies $y^q \not\in I_1I_2 = I$, a contradiction.
\end{proof} 

Let $\mm$ be the maximal homogeneous ideal of $R = k[x,y]$. 
Since every monomial ideal in $R$ is the product of a monomial with an $\mm$-primary monomial ideal,
we may assume that $I$ is $\mm$-primary. 
In this case, Crispin Quinonez proved that $I$ can be uniquely decomposed as a product of block ideals.

Her proof also describes the block ideals of $I$ from the vertices of the Newton boundary $B(I)$. 
Let $(a_1,b_1),...,(a_{r+1},b_{r+1})$ be the vertices of $N(I)$. 
Without loss of generality we may assume that  $0 = a_1 < \cdots < a_{r+1}$, which implies $b_1 > \cdots > b_{r+1} = 0$.
Let $L_i$ denote the line segment connecting $(a_i,b_i)$ to $(a_{i+1},b_{i+1})$, $i = 1,...,r$. 
Then $B(I) = L_1 \cup \cdots \cup L_r$.

\begin{figure}[ht!]
\begin{tikzpicture}[scale=1]
\draw [<->,thick] (0,3.6) node (yaxis) [above] {}
        |- (5,0) node (xaxis) [right] {};
    
\draw [thick] (0,2.8) coordinate (a) -- (0.5,1.6) coordinate (b) ;
\draw [thick] (0.5,1.6) coordinate (b) -- (1,1) coordinate (c) ;
\draw [thick] (1,1) coordinate (c) -- (1.7,0.6) coordinate (d) ; 
\draw [thick] (1.7,0.6) coordinate (d) -- (3.7,0) coordinate (e);

 \draw (0,2.8) node[left] {$(a_1,b_1)$};
\draw (0,2.1) node[left] {$d_1$};
 \draw (0.25,2) node[above right] {$L_1$};
 \draw (2.5,0.2) node[above right] {$L_r$};
 \draw (2.6,0) node[below right] {$\small{ (a_{r+1},b_{r+1})}$};
 \draw (2.3,0) node[below] {$c_r$};

 \draw[dashed] 
 (b)--(yaxis |- b)   node[below right] {$\mathit{c_1}$}
     (d)--(xaxis -| d) node[above left] {$\mathit{d_r}$};
    
\fill (a) circle (2pt);
    \fill (b) circle (2pt);
  \fill (c) circle (2pt);
  \fill (d) circle (2pt);
  \fill (e) circle (2pt);
\end{tikzpicture}
 \end{figure}

Put $c_i = a_{i+1}-a_i$ and $d_i = b_{i}-b_{i+1}$, $i = 1,...,r$.
Let $p_i, q_i$ be positive numbers with $\gcd(p_i,q_i) = 1$ such  that $p_i/q_i = d_i/c_i$. 
Let $C_i = \overline{(x^{p_i},y^{q_i})}$ and $n_i = \gcd(c_i,d_i)$.

\begin{Theorem}  \label{decomposition}  {\rm (\cite[Theorem 3.8]{Cr})}
Let $I$ be an $\mm$-primary complete monomial ideal in $R$. With the above notations we have
$$I = C_1^{n_1} \cdots C_r^{n_r}.$$
\end{Theorem}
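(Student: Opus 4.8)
The plan is to prove the identity by showing that both sides are complete monomial ideals with the same Newton boundary, exploiting the fact recorded above that an $\mm$-primary complete monomial ideal in $R = k[x,y]$ is uniquely determined by $B(I)$. The two structural inputs I would use are Zariski's theorem, which guarantees that the product $C_1^{n_1}\cdots C_r^{n_r}$ is again complete, and the translation of ideal multiplication into geometry: for complete monomial ideals the Newton polyhedron of a product is the Minkowski sum of the Newton polyhedra, $N(IJ) = N(I) + N(J)$. Granting this, the Newton boundary of the right-hand side is the Minkowski sum of the segments $B(C_i^{n_i})$. Since $C_i = \overline{(x^{p_i},y^{q_i})}$ is a block ideal, $B(C_i)$ is the segment from $(p_i,0)$ to $(0,q_i)$, and because $(x^{p_i},y^{q_i})^{n_i}$ is generated by the monomials $x^{jp_i}y^{(n_i-j)q_i}$, whose exponent points all lie on one line, $B(C_i^{n_i})$ is the single segment from $(n_ip_i,0)$ to $(0,n_iq_i)$.

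The geometric core is the description of Minkowski sums of planar convex polygons: the compact lower-left boundary of the sum is obtained by translating the edges of the summands and concatenating them in order of increasing slope, parallel edges being merged by adding their lengths. Applying this to $\sum_i n_i N(C_i)$, I would check that each segment $B(C_i^{n_i})$ is precisely a translate of the edge $L_i$: from $\gcd(p_i,q_i)=1$ and $n_i = \gcd(c_i,d_i)$ the defining proportion between $(p_i,q_i)$ and $(c_i,d_i)$ forces $B(C_i^{n_i})$ to have the same horizontal run, vertical drop, and slope as $L_i$, so that $B(C_i^{n_i})$ is a translate of $L_i$. Since the $(a_i,b_i)$ are genuine vertices of a convex polygon, the slopes of $L_1,\dots,L_r$ are strictly increasing; hence the segments $B(C_i^{n_i})$ have pairwise distinct slopes and, sorted by slope, concatenate to the convex path $L_1\cup\cdots\cup L_r$. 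Checking that the extreme endpoints match $(0,b_1)$ and $(a_{r+1},0)$ then shows that the Minkowski sum reproduces $B(I)$ exactly, whence $C_1^{n_1}\cdots C_r^{n_r} = I$.

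If uniqueness of the decomposition is wanted, I would invoke Zariski's unique factorization of complete ideals into simple complete ideals together with Proposition~\ref{simple}, which ensures that each $C_i$ is simple; thus the displayed product is the factorization of $I$ into simple ideals and is forced.

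The step I expect to be the main obstacle is establishing $N(IJ) = N(I) + N(J)$ for complete monomial ideals, i.e. that multiplying complete monomial ideals corresponds exactly to Minkowski-summing their Newton polyhedra. The inclusion $N(I) + N(J) \subseteq N(IJ)$ is immediate from the generators of a product, but the reverse inclusion genuinely uses that $I$, $J$ and $IJ$ are integrally closed, so that each equals the ideal determined by its Newton polyhedron and no spurious compact faces appear in the product. Once this correspondence is secured, the remainder is the routine combinatorics of edges under Minkowski sums.
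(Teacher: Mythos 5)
Your argument is essentially correct, but you should know that the paper does not prove this statement at all: it is quoted from Crispin Quinonez \cite[Theorem 3.8]{Cr}, and the only thing the paper adds is the informal geometric remark after the statement (that $L_i$ is the union of $n_i$ translates of $B(C_i)$), which is precisely the picture your proof makes rigorous. Your route --- both sides are complete monomial ideals, a complete monomial ideal is determined by its Newton polyhedron, $N(IJ)=N(I)+N(J)$, and the Minkowski sum of the segments $n_iB(C_i)$, concatenated in order of slope, reproduces $B(I)$ --- is a legitimate self-contained proof, and it uses only facts the paper itself records (the homogeneous version of Zariski's theorem for the completeness of the right-hand side, and the additivity of Newton polyhedra, which the paper uses again in Lemma \ref{additive}). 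Two small points. First, the identity $N(IJ)=N(I)+N(J)$ is not the delicate step you fear: it holds for arbitrary monomial ideals, because the exponent set of $IJ$ equals the sum of the exponent sets of $I$ and $J$ and the convex hull of a sum of sets is the Minkowski sum of the convex hulls; integral closedness enters elsewhere, namely in guaranteeing that $C_1^{n_1}\cdots C_r^{n_r}$ is complete and hence recoverable from its Newton polyhedron, and that $I$ itself is. Second, when you match $n_iB(C_i)$ with $L_i$ you are implicitly using $c_i=n_ip_i$ and $d_i=n_iq_i$; note that the paper's displayed convention $p_i/q_i=d_i/c_i$ is inconsistent with this (it should read $q_i/p_i=d_i/c_i$, as the proof of Theorem \ref{length} confirms), so you should state explicitly the proportion you actually use rather than gesturing at ``the defining proportion.''
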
 

Geometrically, the slope of $L_i$ is given by the ratio $d_i/c_i$ and $n_i+1$ is the number of lattice points on $L_i$.
So we can decompose $L_i$ into $n_i$ line segments whose interior does not contain any lattice point.
Since $B(C_i)$ is the line segment connecting $(p_i,0)$ to $(0,q_i)$ which has the same slope,
we may consider $L_i$ as the union of $n_i$ copy of $B(C_i)$.
So we can read off the block ideals of $I$ from $B(I)$. \par

\begin{Remark} \label{option}
{\rm It is sometime more convenient to write $I$ as a product of integral closures of complete intersections 
which are not necessarily block ideals. Let $I = J_1 \cdots J_r$, where $J_i = \overline{(x^{c_i},y^{d_i})}$
for some positive integers  $c_i,d_i$, $i = 1,...,r$. Define $p_i, q_i$, and $n_i$ as above. Then $I = C_1^{n_1} \cdots C_r^{n_r}$ is a product of block ideals, where $C_1,...,C_r$ are not necessarily different. Without restriction we may assume that $d_1/c_1 \ge \cdots \ge d_r/c_r$. Define 
$a_1 = 0$, $b_{r+1} = 0$ and
\begin{align*}
a_i & = c_1 + \cdots + c_{i-1},\ i = 2,...,r+1,\\
b_i & = d_i + \cdots + d_r,\ i = 1,...,r.
\end{align*}
Then we still have $B(I) = L_1 \cup \cdots \cup L_r$, although the points $(a_1,b_1),...,(a_{r+1},b_{r+1})$ need not to be the vertices of $N(I)$.}
\end{Remark}

For any subset $P \subset \RR_+^2$ we denote by $l_P$ the number of lattice points in $P$.
For short we set $l_I = l_{B(I)}$. 

\begin{Lemma}  \label{boundary}  
Let $I = J_1 \cdots J_r$, where $J_i = \overline{(x^{c_i},y^{d_i})}$
for some positive integers  $c_i,d_i$, $i = 1,...,r$. Then 
$$l_I = \sum_{i=1}^r \gcd(c_i,d_i) + 1,$$
and $l_I - 1$ is the number of block ideals in the decomposition of $I$.
\end{Lemma}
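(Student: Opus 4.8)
The plan is to count the lattice points on $B(I)$ directly, using the decomposition $B(I) = L_1 \cup \cdots \cup L_r$ supplied by Remark \ref{option}, together with the classical formula for the number of lattice points on a segment joining two lattice points. First I would recall that for two lattice points $P,Q \in \ZZ^2$, the closed segment $\overline{PQ}$ contains exactly $\gcd(|P_x-Q_x|,|P_y-Q_y|)+1$ lattice points; its interior lattice points are the points $P + (k/g)(Q-P)$ for $0 < k < g$, where $g = \gcd(|P_x-Q_x|,|P_y-Q_y|)$. Applying this to $L_i$, whose endpoints $(a_i,b_i)$ and $(a_{i+1},b_{i+1})$ are lattice points with $a_{i+1}-a_i = c_i$ and $b_i-b_{i+1} = d_i$, I get that $L_i$ contains exactly $\gcd(c_i,d_i)+1$ lattice points, counting both endpoints.

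Next I would assemble these counts over the whole boundary. Since $c_i > 0$ for every $i$, the first coordinates satisfy $a_1 < a_2 < \cdots < a_{r+1}$, so consecutive segments $L_i$ and $L_{i+1}$ meet in the single shared endpoint $(a_{i+1},b_{i+1})$, and no two nonconsecutive segments share a lattice point. Hence the $r-1$ interior junctions $(a_2,b_2),\ldots,(a_r,b_r)$ are each counted twice when I sum $\gcd(c_i,d_i)+1$ over $i$, while every other lattice point of $B(I)$ is counted once. Correcting for this double count yields
$$l_I = \sum_{i=1}^r \big(\gcd(c_i,d_i)+1\big) - (r-1) = \sum_{i=1}^r \gcd(c_i,d_i) + 1,$$
which is the asserted formula.

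Finally, for the second assertion I would invoke Theorem \ref{decomposition}, in the form of Remark \ref{option}: one has $I = C_1^{n_1}\cdots C_r^{n_r}$ with $n_i = \gcd(c_i,d_i)$, so the number of block ideals occurring in the decomposition, counted with multiplicity, is $\sum_{i=1}^r n_i = \sum_{i=1}^r \gcd(c_i,d_i) = l_I - 1$. By Proposition \ref{simple} the block ideals are simple, so by the uniqueness of the factorization into simple complete ideals this count is intrinsic to $I$, consistent with the fact that $l_I$ depends only on $B(I)$.

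The main thing to be careful about is the overlap bookkeeping: I must verify that the segments $L_i$ overlap only in their shared endpoints, so that the inclusion–exclusion correction is exactly $r-1$ and not larger. This is where the possibility of equal slopes among consecutive factors $J_i$ — in which case some $(a_{i+1},b_{i+1})$ fails to be a genuine vertex of $N(I)$ — must be checked to cause no trouble. It does not: the strict inequalities $a_1 < \cdots < a_{r+1}$ force any two consecutive segments to meet in a single point regardless of whether they happen to be collinear, so the count goes through unchanged.
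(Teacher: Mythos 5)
Your proof is correct and follows essentially the same route as the paper: both count $\gcd(c_i,d_i)+1$ lattice points on each segment $L_i$ and subtract $r-1$ for the doubly counted junction points $(a_2,b_2),\ldots,(a_r,b_r)$, then identify $\sum_i \gcd(c_i,d_i) = \sum_i n_i$ with the number of block ideals via Remark \ref{option}. Your extra verification that consecutive segments meet only in the shared endpoint (even when collinear) is a careful touch the paper leaves implicit, but it does not change the argument.
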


\begin{proof}
By Remark \ref{option}, $n_1+ \cdots +n_r$ is the number of block ideals in the decomposition of $I$.
On the other hand, $n_i+1$ is the number of lattice points on $L_i$, $i = 1,...,r$.
Hence $n_1+ \cdots +n_r +r$ is the sum of the numbers of lattice points on $L_1,...,L_r$.
Since $B(I) = L_1 \cup \cdots \cup L_r$ and since the points $(a_2,b_2),...,(a_r,b_r)$ are counted twice in the above sum, we get
$$l_I = (n_1+ \cdots + n_r + r) - (r-1) = n_1 + \cdots + n_r +1 = \sum_{i=1}^r \gcd(c_i,d_i) + 1.$$  
\end{proof} 

\begin{Lemma}  \label{product number}  
Let $I = J_1 \cdots J_r$, where $J_1,...,J_r$ are $\mm$-primary complete monomial ideals in $R$.  Then
$$l_{J_1 \cdots J_r} = l_{J_1} + \cdots + l_{J_r} - r +1.$$
\end{Lemma}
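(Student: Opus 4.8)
The plan is to reduce everything to Lemma \ref{boundary} by writing each factor $J_k$ as a product of integral closures of complete intersections, and then to observe that $I$ is nothing but the product of all these complete intersections collected together.

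First I would invoke Theorem \ref{decomposition} (equivalently Remark \ref{option}) to express each $\mm$-primary complete monomial ideal $J_k$ as a product of block ideals, hence in the form
$$J_k = \prod_{i=1}^{s_k} \overline{(x^{c_{k,i}}, y^{d_{k,i}})}$$
for suitable positive integers $c_{k,i}, d_{k,i}$. Since the product of complete monomial ideals is again a complete monomial ideal, the ideal $I = J_1 \cdots J_r$ is obtained simply by gathering all these complete-intersection factors:
$$I = \prod_{k=1}^r \prod_{i=1}^{s_k} \overline{(x^{c_{k,i}}, y^{d_{k,i}})}.$$

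Next I would apply Lemma \ref{boundary} twice. Applied to the presentation of $I$ displayed above, it gives
$$l_I = \sum_{k=1}^r \sum_{i=1}^{s_k} \gcd(c_{k,i}, d_{k,i}) + 1,$$
while applied to each individual factorization $J_k = \prod_i \overline{(x^{c_{k,i}}, y^{d_{k,i}})}$ it gives $l_{J_k} = \sum_{i=1}^{s_k} \gcd(c_{k,i}, d_{k,i}) + 1$, that is, $\sum_i \gcd(c_{k,i}, d_{k,i}) = l_{J_k} - 1$. Substituting the latter into the former yields
$$l_I = \sum_{k=1}^r (l_{J_k} - 1) + 1 = l_{J_1} + \cdots + l_{J_r} - r + 1,$$
as claimed.

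The only point needing care is the legitimacy of playing off two different presentations against each other: Lemma \ref{boundary} computes the intrinsic quantity $l_I$ from \emph{any} presentation of $I$ as a product of integral closures of complete intersections, so the sum of the $\gcd$'s does not depend on the chosen presentation. This is exactly what allows me to compare the presentation of $I$ coming from the full product $J_1 \cdots J_r$ with the presentations coming from the individual factors $J_k$. I expect the main (and only modest) obstacle to be confirming that each $J_k$ admits such a complete-intersection presentation in the first place, which is guaranteed by the Zariski/Crispin Quinonez decomposition recorded in Theorem \ref{decomposition}; beyond that, no geometric argument about the boundary $B(I)$ is required, since Lemma \ref{boundary} already encapsulates the lattice-point count.
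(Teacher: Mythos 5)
Your proof is correct and follows essentially the same route as the paper: the paper also decomposes each $J_k$ into block ideals via Theorem \ref{decomposition}, observes that the block ideals of $J_1\cdots J_r$ are the union of those of the factors, and then applies Lemma \ref{boundary} (in the form ``$l_I-1$ is the number of block ideals''), which is exactly your $\gcd$-sum bookkeeping. Your added remark that Lemma \ref{boundary} is presentation-independent is a nice clarification but does not change the argument.
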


\begin{proof}
By Theorem \ref{decomposition}, the number of block ideals in the decomposition of $J_1 \cdots J_r$ is the sum 
of the numbers of block ideals in the decompositions of $J_1,...,J_r$.
Hence the assertion follows from Lemma \ref{boundary}.
\end{proof}

\section{Colength of an complete monomial ideal}

Let $R$ be a polynomial ring over a field $k$.
Let $I$ be an $\mm$-primary complete monomial ideal in $R$, 
where $\mm$ is the maximal homogeneous ideal of $R$.
Then $R/I$ is an $R$-module of finite length. One calls $\ell(R/I)$ the {\em colength} of $I$.
It is clear that $\ell(R/I)$ is the number of the monomials not in $I$. 
Since $I$ is generated by the monomials whose exponent vectors belong to $N(I)$, 
the monomials not in $I$ correspond to the lattice points of the complement of $N(I)$ in $\NN^n$. \par

Let $I$ be an $\mm$-primary complete monomial ideal in $R = k[x,y]$.  
Let $(a_I,0), (0,b_I)$ be the end points of the Newton boundary $B(I)$. 
Let $\overline{P(I)}$ be the polygon confined by $B(I)$ and the two line segments connecting the origin $(0,0)$ with the points $(a_I,0), (0,b_I)$. Then $\overline{P(I)} \setminus B(I)$ is the complement of $N(I)$ in $\NN^n$. Hence
$\ell(R/I) = l_{\overline{P(I)}} - l_I.$

\begin{figure}[ht!]
\begin{tikzpicture}[scale= 0.8]
\draw [<->,thick] (0,3.7) node (yaxis) [above] {}
        |- (4.5,0) node (xaxis) [right] {};
    
\draw [thick] (0,2.8) coordinate (a) -- (0.5,1.6) coordinate (b) ;
\draw [thick] (0.5,1.6) coordinate (b) -- (1,1) coordinate (c) ;
\draw [thick] (1,1) coordinate (c) -- (1.7,0.6) coordinate (d) ; 
\draw [thick] (1.7,0.6) coordinate (d) -- (3.5,0) coordinate (e);
\draw  [thick] (3.5,0)  -- (3.5,2.8) coordinate (f);
\draw [thick] (0,2.8)  -- (3.5,2.8);

 \draw (0,2.7) node[above left] {$(0, b_I)$};
\draw (3.5,0) node[below] {$(a_I, 0)$};
\draw (3.5,2.8) node[above] {$(a_I, b_I)$};
 \draw (2.2,1.2) node[above] {$P(I)$};
  \draw (0.1,0) node[below left] {$(0,0)$};
\draw (1.4,0.9) node[below left] {$\overline{P(I)}$};
     
\fill (0,0) circle (2pt);
\fill (a) circle (2pt);
    \fill (b) circle (2pt);
  \fill (c) circle (2pt);
  \fill (d) circle (2pt);
  \fill (e) circle (2pt);
  \fill (f) circle (2pt);
\end{tikzpicture}
 \end{figure}
 
We can estimate $l_{\overline{P(I)}}$ by using Pick's theorem which relates the lattice points of a polygon with its area. 
Recall that a polygon is a {\em lattice polygon} if its vertices are lattice points.  
\smallskip

Let $V(P)$ denote the area of a lattice polygon $P$. 
Let $i_P$ resp. $b_P$ be the numbers of lattice points in the interior  resp. the boundary of $P$. 

\begin{Theorem}  \label{Pick}  {\rm (Pick's Theorem, see e.g. \cite{Gr})}
$\displaystyle V(P) = i_P + \frac{b_P}{2} - 1.$
\end{Theorem}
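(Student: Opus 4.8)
The plan is to show that both sides of the asserted identity are additive under subdivision and then to verify the formula on triangles, from which the general case follows by triangulation. Write $\pi(P) = i_P + \frac{b_P}{2} - 1$ for the right-hand side. Since areas add when a lattice polygon is cut along a segment joining two of its lattice points, the area $V$ is trivially additive, so the first real step is to prove that $\pi$ enjoys the same additivity.

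First I would prove the gluing lemma: if $P = P_1 \cup P_2$, where $P_1$ and $P_2$ are lattice polygons meeting exactly along a common edge $E$ running from $A$ to $B$, and $E$ contains $m$ lattice points in its relative interior, then $\pi(P) = \pi(P_1) + \pi(P_2)$. The content is a lattice-point count: the $m$ interior points of $E$ lie on the boundary of both $P_1$ and $P_2$ but become interior to $P$, while the endpoints $A$ and $B$ stay on the boundary of $P$. Recording these changes gives $i_P = i_{P_1} + i_{P_2} + m$ and $b_P = b_{P_1} + b_{P_2} - 2m - 2$, and substituting into $\pi$ the terms in $m$ cancel while the two added constants combine to yield precisely $\pi(P_1) + \pi(P_2)$.

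Next I would verify the formula directly on the building blocks. For an axis-parallel rectangle of integer width $w$ and height $h$ a direct count gives $i_P = (w-1)(h-1)$ and $b_P = 2(w+h)$, whence $\pi(P) = wh = V(P)$. Cutting the rectangle along a diagonal into two congruent right triangles and invoking the additivity of both $V$ and $\pi$ then forces the formula for each such right triangle. An arbitrary lattice triangle is obtained from its axis-parallel bounding rectangle by deleting at most three right triangles of this kind, so repeated application of the gluing lemma establishes Pick's formula for every lattice triangle. Finally, any lattice polygon admits a triangulation into lattice triangles, and applying the gluing lemma inductively together with the triangle case yields $V(P) = \pi(P)$.

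I expect the triangle base case to be the main obstacle. The additivity of $\pi$ and the reassembly from a triangulation are essentially bookkeeping, but the passage through the bounding rectangle demands a careful case analysis: a side of the triangle may lie along a side of the rectangle, a vertex may coincide with a corner, and one or more of the three deleted right triangles may degenerate. The cases must be organized so that the gluing lemma applies cleanly in each configuration.
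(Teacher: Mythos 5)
The paper does not prove this statement at all: Pick's theorem is quoted as a classical result with a pointer to Gr\"unbaum--Shephard, so there is no in-paper argument to compare yours against. Your proposal is the standard self-contained proof and its skeleton is sound: the gluing lemma is correct as you state it (the counts $i_P = i_{P_1}+i_{P_2}+m$ and $b_P = b_{P_1}+b_{P_2}-2m-2$ do make $\pi$ additive), the rectangle count $i_P=(w-1)(h-1)$, $b_P=2(w+h)$ is right, and halving the rectangle by a diagonal settles axis-parallel right triangles because the two halves are exchanged by the lattice-preserving map $(x,y)\mapsto(w-x,h-y)$ and hence have equal $i$, $b$, and area. Two points deserve tightening. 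First, in the bounding-box step the clean way to organize the cases is to observe that minimality of the box forces at least one vertex of the triangle to sit at a corner of the box; the three triangle sides then cut the box boundary into three arcs each containing at most one box corner, so each complementary piece is an axis-parallel right triangle (possibly degenerate) and no rectangular leftover ever occurs --- this is exactly the case analysis you flag as the main obstacle, and it does close. Second, you should say a word about why every simple lattice polygon admits a triangulation by diagonals into lattice triangles (existence of a diagonal in a simple polygon); this is not automatic from convexity and it matters here, since the region $\overline{P(I)}$ to which the paper applies Pick's theorem is in general not convex, as Remark \ref{convex} points out. With those two points made explicit your argument is complete and, unlike the paper, self-contained.
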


In the following we set $s_I := V(\overline{P(I)})$. 

\begin{Lemma}  \label{colength}
$\displaystyle \ell(R/I) = s_I + \frac{1}{2}(a_I+b_I-l_I+1).$
\end{Lemma}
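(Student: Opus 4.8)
The plan is to combine the identity $\ell(R/I) = l_{\overline{P(I)}} - l_I$ recorded just above the statement with Pick's theorem (Theorem \ref{Pick}) applied to the polygon $\overline{P(I)}$. First I would observe that $\overline{P(I)}$ is a lattice polygon: its boundary consists of the two coordinate segments from the origin to $(a_I,0)$ and to $(0,b_I)$ together with the Newton boundary $B(I)$, and all the vertices involved---the origin, the points $(a_I,0)$ and $(0,b_I)$, and the vertices of $N(I)$ lying on $B(I)$---are lattice points. Writing $i := i_{\overline{P(I)}}$ and $b := b_{\overline{P(I)}}$ for the numbers of interior and boundary lattice points, we have $l_{\overline{P(I)}} = i + b$, while Pick's theorem gives $s_I = V(\overline{P(I)}) = i + b/2 - 1$. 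Eliminating $i$ yields
$$l_{\overline{P(I)}} = s_I + \frac{b}{2} + 1.$$

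The key computation is to determine $b$. I would count the lattice points on the three sides separately: the horizontal segment from $(0,0)$ to $(a_I,0)$ contains $a_I+1$ lattice points, the vertical segment from $(0,0)$ to $(0,b_I)$ contains $b_I+1$, and the Newton boundary $B(I)$ contains $l_I$ lattice points by definition of $l_I$. Since each of the three corners $(0,0)$, $(a_I,0)$, $(0,b_I)$ lies on exactly two of these sides, an inclusion--exclusion argument gives
$$b = (a_I+1) + (b_I+1) + l_I - 3 = a_I + b_I + l_I - 1.$$

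Substituting this value of $b$ into the expression for $l_{\overline{P(I)}}$ and then into $\ell(R/I) = l_{\overline{P(I)}} - l_I$, a short simplification produces
$$\ell(R/I) = s_I + \frac{a_I + b_I + l_I - 1}{2} + 1 - l_I = s_I + \frac{1}{2}\bigl(a_I + b_I - l_I + 1\bigr),$$
which is the asserted formula. I do not expect a genuine obstacle in this argument; the only step requiring care is the bookkeeping in the boundary count, namely making sure that the three shared corners are counted exactly once when the three sides are merged, so that the factor $-3$ in the formula for $b$ is correct.
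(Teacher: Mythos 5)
Your proof is correct and follows essentially the same route as the paper: apply Pick's theorem to $\overline{P(I)}$, compute $b_{\overline{P(I)}} = a_I + b_I + l_I - 1$, and substitute into $\ell(R/I) = l_{\overline{P(I)}} - l_I$. The only difference is that you spell out the inclusion--exclusion count of boundary lattice points, which the paper dismisses as ``easy to see''; your bookkeeping is accurate.
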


\begin{proof}
By Theorem \ref{Pick} we have  
$$l_{\overline{N(I)}} = l_{\overline{P(I)}} - l_I = i_{\overline{P(I)}} + b_{\overline{P(I)}}- l_I = s_I  +  \frac{b_{\overline{P(I)}}}{2} +1 - l_I.$$
It is easy to see that $b_{\overline{P(I)}} = a_I + b_I + l_I -1$. Therefore,
$$\ell(R/I)  =  s_I + \frac{1}{2}(a_I+b_I+l_I-1) +1 - l_I = s_I + \frac{1}{2}(a_I+b_I-l_I+1).$$
\end{proof} 

\begin{Remark} \label{convex}
{\rm In general, $\overline{P(I)}$ is not a convex polygon. 
However, one can reduce the computation of $s_I$ to the computation of volumes of  convex polygons.
Let $Q(I)$ denote the rectangle with the vertices $(0,0),(a_I,0), (b_I,0),(a_I,b_I)$.
Let $P(I)$ denote the convex polygon defined by $B(I)$ and the two line segments connecting $(a_I,0), (0,b_I)$ with $(a_I,b_I)$. Then}
\begin{equation*}
s_I = V(Q(I)) - V(P(I)). \tag{*} 
\end{equation*}
\end{Remark}

If we know a decomposition of $I$ as a product of integral closures of complete intersections, we can compute $s_I$ directly.

\begin{Lemma} \label{area}
Let $I = J_1  \cdots J_r$, where $J_i = \overline{(x^{c_i},y^{d_i})}$ for some positive integers $c_i, d_i$, $i = 1,...,r$. Assume that $d_1/c_1 \ge \cdots \ge d_r/c_r$.  Then
$$s_I = \sum_{i=1}^r\frac{c_id_i}{2} + \sum_{1 \le i < j \le r}c_id_j.$$
\end{Lemma}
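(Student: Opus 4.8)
The plan is to compute $s_I = V(\overline{P(I)})$ directly, by slicing the region $\overline{P(I)}$ into vertical trapezoids and summing their areas, with the vertices of the Newton boundary read off from Remark \ref{option}.

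First I would invoke Remark \ref{option}. Since $d_1/c_1 \ge \cdots \ge d_r/c_r$, we have $B(I) = L_1 \cup \cdots \cup L_r$, where $L_i$ joins $(a_i,b_i)$ to $(a_{i+1},b_{i+1})$, with $a_1 = 0$, $b_{r+1} = 0$, and $a_i = c_1 + \cdots + c_{i-1}$, $b_i = d_i + \cdots + d_r$. In particular $a_I = a_{r+1}$ and $b_I = b_1$. As $i$ increases the slope $-d_i/c_i$ of $L_i$ increases toward $0$, so $B(I)$ is the graph of a convex decreasing function on $[0,a_I]$; the region $\overline{P(I)}$ is exactly the part of the first quadrant lying below this graph, bounded on the left by the $y$-axis and below by the $x$-axis.

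Next I would cut $\overline{P(I)}$ by the vertical lines $x = a_i$ for $i = 1,\dots,r+1$. Over the interval $[a_i,a_{i+1}]$ the region is the trapezoid below $L_i$, with vertical sides of heights $b_i$ and $b_{i+1}$ and horizontal width $a_{i+1}-a_i = c_i$, hence of area $\frac{1}{2}(b_i+b_{i+1})c_i$. That these slices tile $\overline{P(I)}$ without overlap is precisely the convexity just noted, i.e.\ the monotonicity of the slopes forced by the hypothesis. Summing gives
$$s_I = \sum_{i=1}^r \frac{c_i}{2}(b_i + b_{i+1}).$$
Finally I would substitute $b_i = \sum_{k \ge i} d_k$, so that $b_i + b_{i+1} = d_i + 2\sum_{k=i+1}^r d_k$, obtaining
$$s_I = \sum_{i=1}^r \frac{c_i d_i}{2} + \sum_{i=1}^r c_i \sum_{k=i+1}^r d_k = \sum_{i=1}^r \frac{c_i d_i}{2} + \sum_{1 \le i < j \le r} c_i d_j,$$
which is the claimed identity.

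The argument is entirely elementary, so there is no serious analytic obstacle; the only point requiring care is the geometric setup, namely verifying that the vertical cuts genuinely partition $\overline{P(I)}$ into the trapezoids described. This reduces to the statement that $B(I)$ is a convex decreasing polyline, which is exactly what the ordering $d_1/c_1 \ge \cdots \ge d_r/c_r$ guarantees through Remark \ref{option}. An alternative that bypasses this bookkeeping is to use Remark \ref{convex}: compute $V(P(I))$ and subtract it from $V(Q(I)) = a_I b_I$, or equivalently apply the shoelace formula to the vertex list $(0,0),(a_{r+1},b_{r+1}),\dots,(a_1,b_1)$ directly; either route produces the same rearrangement of the double sum.
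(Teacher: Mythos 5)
Your proof is correct and amounts to essentially the same elementary area computation as the paper's: the paper decomposes $\overline{P(I)}$ by induction on $r$, peeling off the horizontal strip below the line $y=b_r$ (via the observation that $\overline{P(J_1\cdots J_{r-1})}$ is a translate of the remaining piece, so $s_I - s_{J_1\cdots J_{r-1}} = \frac{c_rd_r}{2} + (c_1+\cdots+c_{r-1})d_r$), while you sum the vertical trapezoidal slices directly; the two decompositions yield the same telescoping sum. One minor remark: convexity of the polyline $B(I)$ is not actually needed for your slicing argument --- it suffices that $B(I)$ is the graph of a decreasing piecewise linear function over $[0,a_I]$, which holds because all $c_i>0$; the ordering of the slopes only matters for identifying $B(I)$ with $L_1\cup\cdots\cup L_r$ as in Remark \ref{option}.
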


\begin{proof}
We proceed by induction.
If $r = 1$, the assertion is trivial.
If $r > 1$, let $J = J_1 \cdots J_{r-1}$. By the induction hypothesis,
$$s_J = \sum_{i=1}^{r-1}\frac{c_id_i}{2} + \sum_{1 \le i < j \le r-1}c_id_j.$$
Define $a_i, b_i$ as in Remark \ref{option}, $i =1,...,r+1$. Then $B(I) = L_1 \cup \cdots \cup L_r$, where $L_i$ is the line segment connecting $(a_i,b_i)$ to $(a_{i+1},b_{i+1})$, $i = 1,...,r$. It is clear that $\overline{P(J)}$ is a translation of the polygon $P$ confined by $B(I)$, the vertical axis, and the horizontal line passing through the point $(a_r,b_r)$: 
\begin{figure}[ht!]
\begin{tikzpicture}[scale=1]
\draw [<->,thick] (0,3.6) node (yaxis) [above] {}
        |- (5.2,0) node (xaxis) [right] {};
    
\draw [thick] (0,2.8) coordinate (a) -- (0.5,1.6) coordinate (b) ;
\draw [thick] (0.5,1.6) coordinate (b) -- (1,1) coordinate (c) ;
\draw [thick] (1,1) coordinate (c) -- (1.6,0.7) coordinate (d) ; 
\draw [thick] (1.6,0.7) coordinate (d) -- (4,0) coordinate (e);
\draw [dashed] (1.6,0.7) coordinate (d) -- (0,0.7);

 \draw (0,2.8) node[left] {$(a_1,b_1)$};
 \draw (0.25,2) node[above right] {$L_1$};
   \draw (0.4,0.8) node[above] {$P$};
 \draw (2.7,0.2) node[above right] {$L_r$};
  \draw (2,0.7) node[above] {$(a_r,b_r)$};
 \draw (2.9,0) node[below right] {$\small{ (a_{r+1},b_{r+1})}$};
 \draw (2.4,0) node[below] {$c_r$};

 \draw[dashed] 
(d)--(xaxis -| d) node[above left] {$\mathit{d_r}$};
    
\fill (a) circle (2pt);
    \fill (b) circle (2pt);
  \fill (c) circle (2pt);
  \fill (d) circle (2pt);
  \fill (e) circle (2pt);
\end{tikzpicture}
 \end{figure}

Thus,  $s_I - s_J = V(\overline{P(I)} \setminus P) = \displaystyle \frac{c_rd_r}{2} +(c_1+\cdots+c_{r-1})d_r$. Hence the assertion is immediate. 
\end{proof} 

Now we can deduce an explicit formula for the colength of $I$ in terms of the decomposition of $I$ as a product of  block ideals.

\begin{Theorem}    \label{length}
Let $I = C_1^{n_1}  \cdots C_r^{n_r}$ be the decomposition of $I$ as a product of  block ideals
$C_i = \overline{(x^{p_i},y^{q_i})}$, $i = 1,...,r$.
Assume that $q_1/p_1 > \cdots > q_r/p_r$. Then
$$\ell(R/I) = \sum_{i=1}^r\frac{p_iq_i}{2}n_i^2 + \sum_{1 \le i < j \le r}p_iq_j n_in_j+ \frac{1}{2}\sum_{i=1}^r(p_i+q_i-1)n_i.$$
\end{Theorem}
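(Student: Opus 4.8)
The plan is to reduce the theorem to the three preceding lemmas by rewriting $I$ in the product form required by Lemma~\ref{area}. The key observation is that the $n_i$-th power of a block ideal is again the integral closure of a complete intersection: one has $C_i^{n_i} = \overline{(x^{n_ip_i},y^{n_iq_i})}$, because both ideals have the same Newton boundary, namely the segment joining $(n_ip_i,0)$ and $(0,n_iq_i)$, and a complete monomial ideal is determined by its Newton boundary. Setting $c_i:=n_ip_i$, $d_i:=n_iq_i$ and $J_i:=\overline{(x^{c_i},y^{d_i})}=C_i^{n_i}$, I obtain $I=J_1\cdots J_r$, and the hypothesis $q_1/p_1>\cdots>q_r/p_r$ becomes $d_1/c_1>\cdots>d_r/c_r$, so that Lemmas~\ref{boundary} and~\ref{area} apply verbatim.

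First I would compute $s_I$ from Lemma~\ref{area}. Substituting $c_id_i=p_iq_in_i^2$ and $c_id_j=p_iq_jn_in_j$ gives
$$s_I=\sum_{i=1}^r\frac{p_iq_i}{2}n_i^2+\sum_{1\le i<j\le r}p_iq_jn_in_j,$$
which is exactly the first two terms of the claimed formula. It therefore remains to show that the correction term $\frac{1}{2}(a_I+b_I-l_I+1)$ appearing in Lemma~\ref{colength} equals $\frac{1}{2}\sum_{i=1}^r(p_i+q_i-1)n_i$.

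For this I would read off $a_I$, $b_I$, $l_I$ directly from the Newton boundary $B(I)=L_1\cup\cdots\cup L_r$. The total horizontal and vertical extents give $a_I=\sum_{i=1}^r c_i=\sum_{i=1}^r n_ip_i$ and $b_I=\sum_{i=1}^r d_i=\sum_{i=1}^r n_iq_i$, while Lemma~\ref{boundary} together with $\gcd(p_i,q_i)=1$ yields $l_I=\sum_{i=1}^r\gcd(n_ip_i,n_iq_i)+1=\sum_{i=1}^r n_i+1$. Substituting these into $a_I+b_I-l_I+1$ makes the constants $+1$ and $-1$ cancel and collapses the expression to $\sum_{i=1}^r(p_i+q_i-1)n_i$. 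Feeding $s_I$ and this correction term into $\ell(R/I)=s_I+\frac{1}{2}(a_I+b_I-l_I+1)$ then produces the theorem.

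Since all the genuine geometric work is already carried out in the lemmas, there is no serious obstacle here. The only point requiring care is the translation step: verifying $C_i^{n_i}=\overline{(x^{n_ip_i},y^{n_iq_i})}$ and checking that the ordering hypothesis on the slopes $q_i/p_i$ matches the decreasing ordering $d_i/c_i$ demanded by Lemma~\ref{area}. Once the substitution $c_i=n_ip_i$, $d_i=n_iq_i$ is fixed, everything else is routine arithmetic simplification.
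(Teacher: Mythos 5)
Your proposal is correct and follows essentially the same route as the paper's own proof: rewrite $I=J_1\cdots J_r$ with $J_i=C_i^{n_i}=\overline{(x^{n_ip_i},y^{n_iq_i})}$, compute $s_I$ from Lemma~\ref{area}, read off $a_I$, $b_I$, $l_I$ via Lemma~\ref{boundary}, and substitute into Lemma~\ref{colength}. The only difference is that you spell out the justification of $C_i^{n_i}=\overline{(x^{n_ip_i},y^{n_iq_i})}$ via Newton boundaries, which the paper leaves implicit.
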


\begin{proof}
Let $J_i = \overline{(x^{p_in_i},y^{q_in_i})}$, $i = 1,...,r$. 
Then $J_i = C_i^{n_i}$. Hence $I = J_1 \cdots J_r.$ 
By Lemma \ref{area} we have
$$s_I = \sum_{i=1}^r\frac{p_iq_i}{2}n_i^2 + \sum_{1 \le i < j \le r}p_iq_jn_in_j.$$
It is clear that $a_I = p_1n_1 + \cdots + p_rn_r$ and $b_I = q_1n_1+ \cdots + q_rn_r$.
By Lemma \ref{boundary}, $l_I = n_1 + \cdots + n_r +1$. 
If we put these data into  Lemma \ref{colength}, we obtain the assertion. 
\end{proof}

\section{Bhattacharya function}

Let $(R,\mm)$ be a $d$-dimensional local ring.
Let $I,J$ be two $\mm$-primary ideals. Recall that the function $\ell(R/I^mJ^n)$  is a polynomial $P(m,n)$ 
of total degree $d$ for $m, n \gg 0$ and that $\ell(R/I^mJ^n)$ and $P(m,n)$ are called 
the {\em Bhattacharya function} and the {\em Bhattacharya polynomial} of $I,J$. 
If we write
$$P(m,n) = \sum_{0 \le i,j \le d} e_{i,j}{m \choose i}{n \choose j},$$
the numbers $e_{i,j}$ with $i + j = d$ are called the {\em mixed multiplicities} of $I,J$. \par

In this section we will deal with the case $R = k[x,y]$ is a polynomial ring of two variables over a field $k$ and $I,J$ are two $\mm$-primary complete monomial ideals, where $\mm$ is the maximal homogeneous ideal.
We will give an explicit formula for $\ell(R/I^mJ^n)$ in combinatorial terms of $I$ and $J$. 
For that we shall need the following notations. \par

Given two (not necessarily different) convex polygons $P_1$ and $P_2$, the {\em Minkowski sum} of $P_1, P_2$ is defined by
$$P_1 + P_2 = \{\a+\b|\ \a \in P_1, \b \in P_2\}.$$
It is easy to see that $P_1+P_2$ is again a convex polygon. 
The {\em mixed area} $MV(P_1,P_2)$ of $P_1$ and $P_2$ is defined by
$$MV(P_1,P_2) :=  V(P_1+P_2) - V(P_1) - V(P_2).$$
Let $mP_1$ and $nP_2$ denote the Minkowski sums of $n$ times $P_1$ and $n$ times $P_2$, respectively.
It is a classical result that the area of $mP_1 + nP_2$ is a homogeneous polynomial in $m,n$ which involves the mixed area
of $P_1$ and $P_2$.
 
\begin{Lemma}  \label{mixed} {\rm (see e.g. \cite[Ch. 7, Proposition 4.9]{Co})}
Let $P_1$ and $P_2$ be convex polygons in $\RR^2$. Then
$$V(mP_1 + nP_2) =  V(P_1)m^2 + V(P_2)n^2 + MV(P_1,P_2)mn.$$
\end{Lemma}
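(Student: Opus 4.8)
The plan is to establish the formula
$$V(mP_1 + nP_2) = V(P_1)m^2 + V(P_2)n^2 + MV(P_1,P_2)mn$$
by proving the single key fact that the area functional $V$ is a homogeneous function of degree $2$ on polygons under scaling, together with additivity properties of the Minkowski sum that let us expand $V(mP_1+nP_2)$ directly from the definition of mixed area. The first step is to record the scaling identity $V(kP) = k^2 V(P)$ for any nonnegative integer $k$ and any convex polygon $P$; geometrically, the Minkowski sum $kP = P + \cdots + P$ ($k$ times) is just the dilation of $P$ by the factor $k$, so its area scales by $k^2$. This is the conceptual heart of the argument and should be stated cleanly, since everything else is formal manipulation once it is in hand.

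With scaling available, the second step is to set $P_1' = mP_1$ and $P_2' = nP_2$ and apply the definition of mixed area to the pair $(P_1', P_2')$. By definition,
$$MV(mP_1, nP_2) = V(mP_1 + nP_2) - V(mP_1) - V(nP_2),$$
so rearranging gives
$$V(mP_1 + nP_2) = V(mP_1) + V(nP_2) + MV(mP_1, nP_2).$$
Using the scaling identity on the first two terms yields $V(mP_1) = m^2 V(P_1)$ and $V(nP_2) = n^2 V(P_2)$, which already produces the two pure terms of the claimed formula. It then remains to verify that the cross term obeys $MV(mP_1, nP_2) = mn\, MV(P_1, P_2)$, i.e. that mixed area is bilinear under scaling of its two arguments.

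The third step therefore reduces to proving this bilinearity of $MV$. The cleanest route is to write $MV(mP_1,nP_2) = V(mP_1+nP_2) - m^2V(P_1) - n^2V(P_2)$ and expand $V(mP_1+nP_2)$ by treating $mP_1+nP_2$ as a Minkowski sum of $m+n$ summands, applying the polynomial-expansion form of the scaling identity to $V((mP_1)+(nP_2))$; alternatively one invokes the classical fact that $V(sP_1+tP_2)$ is a quadratic form in $s,t$ whose off-diagonal coefficient is precisely $MV(P_1,P_2)$, and then substitutes $s=m$, $t=n$. Either way, matching the coefficient of $mn$ against the definition of $MV(P_1,P_2)$ (the case $m=n=1$) forces $MV(mP_1,nP_2)=mn\,MV(P_1,P_2)$. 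Substituting this into the displayed identity from the second step completes the proof.

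The \textbf{main obstacle} is justifying that $V(sP_1 + tP_2)$ is genuinely a \emph{polynomial} (equivalently, a quadratic form) in the scaling parameters, rather than just verifying the identity for the integer values $m,n$ we care about. For the purposes of this lemma one only needs the integer case, so this can be circumvented: it suffices to prove the scaling identity $V(kP)=k^2V(P)$ for integers and the bilinearity of $MV$ on integer multiples, both of which follow from the dilation interpretation of repeated Minkowski self-sums. Thus the delicate quadratic-form theory of mixed volumes can be replaced by the elementary observation that $kP$ is a dilation, which is the step I would be most careful to state precisely, since conflating the Minkowski self-sum $P+\cdots+P$ with the set-theoretic dilation $\{k\a : \a \in P\}$ is exactly where a gap could slip in.
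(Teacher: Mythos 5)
Your argument correctly reduces the lemma to two facts: the diagonal scaling $V(kP)=k^{2}V(P)$ and the cross-term identity $MV(mP_{1},nP_{2})=mn\,MV(P_{1},P_{2})$. The first is genuinely elementary (for a convex polygon the $k$-fold Minkowski self-sum is the dilation by $k$), and it yields the $m^{2}$ and $n^{2}$ coefficients. But the second identity is where all the content of the lemma lives, and your third step does not actually establish it. The two routes you offer are both circular: invoking ``the classical fact that $V(sP_{1}+tP_{2})$ is a quadratic form in $s,t$ whose off-diagonal coefficient is $MV(P_{1},P_{2})$'' is invoking the lemma itself, and ``expanding $V(mP_{1}+nP_{2})$ as a Minkowski sum of $m+n$ summands via the polynomial-expansion form of the scaling identity'' presupposes that $V(K_{1}+\cdots+K_{N})$ expands as a sum of pairwise mixed areas, which is exactly the multilinearity you are trying to prove. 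The dilation observation only controls sums of copies of a \emph{single} polygon; it says nothing about how $V$ behaves on a sum of two \emph{different} polygons, so the $mn$ coefficient is never pinned down. Note that the paper itself offers no proof here --- it cites Cox--Little--O'Shea --- so if you want a self-contained argument you must supply the missing bilinearity. An honest elementary proof for polygons goes through the edge decomposition of a Minkowski sum: the edges of $P_{1}+P_{2}$ are the edges of $P_{1}$ and of $P_{2}$ merged by outer normal direction, and $P_{1}+P_{2}$ decomposes into a translate of $P_{1}$, a translate of $P_{2}$, and parallelograms spanned by one edge from each polygon; the total parallelogram area is visibly bilinear in the edge vectors, which gives $MV(mP_{1},nP_{2})=mn\,MV(P_{1},P_{2})$. (Equivalently, one can use the support-function formula for the area of a convex polygon.) Without some such argument, your proof establishes only the pure terms and leaves the mixed term unproved.
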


\begin{Lemma}  \label{additive}
Let $I$ and $J$ be  $\mm$-primary complete monomial ideals in $R$. Then\par
{\rm (i) } $Q(I^mJ^n) = mQ(I) + nQ(J),$ \par
{\rm (ii)} $P(I^mJ^n)  =  mP(I) + nP(J).$
\end{Lemma}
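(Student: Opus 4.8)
The plan is to establish both identities directly from the definitions of $Q(\,\cdot\,)$ and $P(\,\cdot\,)$ together with Lemma~\ref{additive}'s natural companion, Lemma~\ref{mixed}, applied at the level of Newton boundaries. The key observation is that the Newton boundary behaves additively under products of complete monomial ideals. Concretely, if $I$ has Newton boundary $B(I)$ and $J$ has Newton boundary $B(J)$, then I expect $B(I^mJ^n)$ to be (a translate of) the Minkowski sum $mB(I)+nB(J)$, because multiplying complete monomial ideals corresponds to taking Minkowski sums of their Newton polyhedra (the exponent vectors of a product are sums of exponent vectors, and completeness guarantees that the compact faces assemble correctly). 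This is exactly the combinatorial content encoded in Remark~\ref{option} and Theorem~\ref{decomposition}, where the block-ideal decomposition of a product is the concatenation of the block-ideal decompositions of the factors, ordered by slope.

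First I would prove (i). The rectangle $Q(I)$ has vertices $(0,0),(a_I,0),(0,b_I),(a_I,b_I)$, so it is completely determined by the two numbers $a_I$ and $b_I$, namely the $x$- and $y$-intercepts of $B(I)$. Thus the whole statement reduces to the two scalar identities
\begin{align*}
a_{I^mJ^n} &= m\,a_I + n\,a_J,\\
b_{I^mJ^n} &= m\,b_I + n\,b_J.
\end{align*}
These follow because the $x$-intercept $a_I$ is the $x$-coordinate at which $B(I)$ meets the horizontal axis, and under the product the slopes simply merge while the horizontal extents add: using the decomposition $I=C_1^{n_1}\cdots C_r^{n_r}$ from Theorem~\ref{decomposition}, one has $a_I=\sum p_i n_i$ and $b_I=\sum q_i n_i$, and the decomposition of $I^mJ^n$ is obtained by taking $m$ copies of each block ideal of $I$ and $n$ copies of each block ideal of $J$ (reindexed by slope), so the intercepts add with the stated coefficients. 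Since $mQ(I)+nQ(J)$ is the rectangle with vertices $(0,0),(ma_I+na_J,0),(0,mb_I+nb_J),(ma_I+na_J,mb_I+nb_J)$, identity (i) is then immediate.

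For (ii), the convex polygon $P(I)$ is confined by $B(I)$ together with the two segments joining $(a_I,0)$ and $(0,b_I)$ to $(a_I,b_I)$. The cleanest route is to identify $P(I)$ as a Minkowski summand governed by $B(I)$: since $P(I)=Q(I)\setminus(\text{region under }B(I))$ up to the convexity bookkeeping, and the Minkowski sum of two such polygons is the polygon whose lower-left boundary is the Minkowski sum of the two Newton boundaries, I would show $P(I^mJ^n)=mP(I)+nP(J)$ by verifying that both sides are the convex polygon with the same upper-right corner $(a_{I^mJ^n},b_{I^mJ^n})$ (handled by part (i)) and the same lower-left boundary, namely $mB(I)+nB(J)$. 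The additivity of the Newton boundary under products, made explicit by the slope-ordered concatenation in Remark~\ref{option}, supplies exactly this equality of boundaries.

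The main obstacle will be justifying the boundary-additivity claim $B(I^mJ^n)=mB(I)+nB(J)$ rigorously, rather than merely reading it off the pictures. The subtle point is that the Minkowski sum of the two lower boundaries must be shown to coincide with the lower boundary of the product ideal's Newton polyhedron, which is where I would lean heavily on Theorem~\ref{decomposition}: the Minkowski sum of two convex polygonal chains orders its edges by slope, and this ordering is precisely the ordering $d_1/c_1\ge\cdots\ge d_r/c_r$ that Remark~\ref{option} imposes on the block ideals of a product. Once the edges (equivalently the block ideals) of $I^mJ^n$ are matched with the union of the edges of $mB(I)$ and $nB(J)$ via this slope ordering, both (i) and (ii) follow, and the resulting area formula $V(P(I^mJ^n))$ will then be amenable to Lemma~\ref{mixed} in the subsequent computation of the Bhattacharya polynomial.
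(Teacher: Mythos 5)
Your proposal is correct in substance, but it takes a genuinely different route from the paper. The paper's proof reduces at once to the case $m=n=1$, gets (i) from the identities $a_{IJ}=a_I+a_J$ and $b_{IJ}=b_I+b_J$, and proves (ii) by a short double inclusion: from $N(IJ)=N(I)+N(J)$ and the fact that the boundary of a Minkowski sum lies in the Minkowski sum of the boundaries one gets $P(IJ)\subseteq P(I)+P(J)$, and the reverse inclusion follows from $P(I)+P(J)\subseteq (Q(I)+Q(J))\cap(N(I)+N(J))=Q(IJ)\cap N(IJ)=P(IJ)$ --- no appeal to the decomposition into block ideals is needed. You instead route both parts through Theorem~\ref{decomposition} and Remark~\ref{option}: the block decomposition of $I^mJ^n$ is the slope-ordered concatenation of $m$ copies of the blocks of $I$ and $n$ copies of the blocks of $J$, which identifies the lower boundary of $P(I^mJ^n)$ with the slope-merge of the edges of $B(I)$ and $B(J)$, i.e.\ with the lower boundary of $mP(I)+nP(J)$. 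This is heavier machinery (it uses uniqueness of the block decomposition and the completeness of products of complete ideals), but it buys a more explicit, edge-by-edge picture of $B(I^mJ^n)$ that also illuminates Lemma~\ref{area}. One caution: the literal equation $B(I^mJ^n)=mB(I)+nB(J)$, read as a Minkowski sum of sets, is false --- the sum of two non-parallel segments is a two-dimensional parallelogram, which is why the paper only asserts the inclusion $B(IJ)\subseteq B(I)+B(J)$. What your argument actually needs, and what the slope-merge description correctly delivers, is that $B(I^mJ^n)$ equals the union of compact faces (the lower-left boundary) of $mN(I)+nN(J)$; phrase the claim that way and your proof is sound.
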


\begin{proof} 
We only need to prove the case $m = n = 1$ because this case can be applied successively to obtain the assertion.
The formula  $Q(IJ) = Q(I) + Q(J)$ follows from the definition of the Minkowski sum and the facts that $a_{IJ} = a_I +a_J$ and $b_{IJ} = b_I + b_J$. 
By the definition of the Newton polyhedron we have $N(IJ) = N(I) + N(J)$. 
Since the boundary of a Minkowski sum is contained in the Minkowski sum of the boundary of the summands, 
$B(IJ) \subseteq B(I) + B(J)$. From this it follows that $P(IJ) \subseteq P(I) + P(J)$.
On the other hand, 
$$P(I) + P(J) \subseteq (Q(I) + Q(J)) \cap (N(I) + N(J)) = Q(IJ) \cap N(IJ) = P(IJ).$$
Therefore, $P(IJ) =  P(I)  +  P(J).$
\end{proof}

\begin{Theorem}  \label{Bhattacharya}  
Let $I, J$ be $\mm$-primary complete monomial ideals in $R = k[x,y]$.  For all $m,n \ge 0$,
\begin{align*}
\ell(R/I^mJ^n) = &\ s_Im^2 + s_Jn^2 + (s_{IJ} - s_I - s_J)mn\\
&  + \frac{1}{2}(a_I +b_I-l_I+1)m +  \frac{1}{2}(a_J +b_J-l_J+1)n.
\end{align*}
\end{Theorem}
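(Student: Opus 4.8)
The plan is to apply the colength formula of Lemma \ref{colength} directly to the ideal $I^mJ^n$. Since the product of complete ideals is again complete in a two-dimensional regular local ring, $I^mJ^n$ is an $\mm$-primary complete monomial ideal, so Lemma \ref{colength} gives
$\ell(R/I^mJ^n) = s_{I^mJ^n} + \tfrac12(a_{I^mJ^n} + b_{I^mJ^n} - l_{I^mJ^n} + 1)$.
The whole problem then reduces to expressing the four quantities $s_{I^mJ^n}$, $a_{I^mJ^n}$, $b_{I^mJ^n}$, and $l_{I^mJ^n}$ as explicit functions of $m$ and $n$, which I would do by separating the computation into a quadratic part (the area $s$) and a linear part.

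For the quadratic part I would start from Remark \ref{convex}, which writes $s_{I^mJ^n} = V(Q(I^mJ^n)) - V(P(I^mJ^n))$, and feed in the additivity $Q(I^mJ^n) = mQ(I) + nQ(J)$ and $P(I^mJ^n) = mP(I) + nP(J)$ from Lemma \ref{additive}. Applying Lemma \ref{mixed} separately to the convex polygons $Q(I),Q(J)$ and to $P(I),P(J)$ and subtracting, the $m^2$- and $n^2$-coefficients collapse to $s_I$ and $s_J$ by Remark \ref{convex}, while the $mn$-coefficient becomes $MV(Q(I),Q(J)) - MV(P(I),P(J))$. Unwinding the definition of mixed area together with $Q(I)+Q(J) = Q(IJ)$ and $P(I)+P(J) = P(IJ)$ (the $m=n=1$ instance of Lemma \ref{additive}) and one further use of Remark \ref{convex} applied to $IJ$, this coefficient simplifies to exactly $s_{IJ} - s_I - s_J$. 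This gives the whole quadratic term of the asserted formula.

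For the linear part I would use the identities $a_{IJ} = a_I + a_J$ and $b_{IJ} = b_I + b_J$ recorded in the proof of Lemma \ref{additive}, which yield $a_{I^mJ^n} = m\,a_I + n\,a_J$ and $b_{I^mJ^n} = m\,b_I + n\,b_J$. For the boundary lattice-point count I would view $I^mJ^n$ as a product of $m+n$ complete monomial factors and invoke Lemma \ref{product number}, obtaining $l_{I^mJ^n} = m\,l_I + n\,l_J - (m+n) + 1$. Substituting these three expressions into $\tfrac12(a_{I^mJ^n} + b_{I^mJ^n} - l_{I^mJ^n} + 1)$, the $-(m+n)$ coming from the lattice count cancels against itself so that the constant $+1$ terms match up, and the term collapses precisely to $\tfrac12(a_I + b_I - l_I + 1)m + \tfrac12(a_J + b_J - l_J + 1)n$. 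Combining this with the quadratic part through Lemma \ref{colength} produces the stated formula; I would note the degenerate cases $m=0$ or $n=0$ separately, where the same computation applies with one family of factors absent.

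The main obstacle I anticipate is not any isolated deep step but the correct identification of the $mn$-coefficient with $s_{IJ} - s_I - s_J$. This hinges on the fact that Minkowski addition is compatible simultaneously with the enclosing rectangles $Q$ and with the convex pieces $P$, and the latter compatibility $P(IJ) = P(I)+P(J)$ is the genuinely nontrivial input supplied by Lemma \ref{additive}. A secondary point requiring care is that Lemma \ref{product number} is being applied with the $m+n$ repeated factors $I,\dots,I,J,\dots,J$, so one must check that each intermediate product is again a complete monomial ideal; this is guaranteed by Zariski's theorem, after which every earlier lemma applies verbatim and no convexity or boundary subtlety intervenes.
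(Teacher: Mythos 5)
Your proposal is correct and follows essentially the same route as the paper: Lemma \ref{colength} applied to $I^mJ^n$, with the quadratic part handled via Remark \ref{convex}, Lemma \ref{additive} and Lemma \ref{mixed} to identify the $mn$-coefficient as $s_{IJ}-s_I-s_J$, and the linear part via the additivity of $a,b$ and Lemma \ref{product number}. No gaps; the computation of the linear term is carried out correctly (and in fact fixes a harmless typo in the paper's displayed formula for $l_{I^mJ^n}$, where the roles of $l_I$ and $l_J$ are inadvertently swapped).
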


\begin{proof}
We will use Lemma \ref{colength} to compute $\ell(R/I^mJ^n)$. 
By Remark \ref{convex} (*)  we have  
$$s_{I^mJ^n} = V(Q(I^mJ^n)) - V(P(I^mJ^n)). $$
Using Lemma \ref{mixed} and Lemma \ref{additive} we obtain a formula for $V(Q(I^mJ^n)) - V(P(I^mJ^n))$ in terms of the areas and mixed areas of $Q(I), Q(J)$ and $P(I), P(J)$.  From this formula and Remark \ref{option} (*) (applied to the ideals $IJ, I, J$) we can deduce that
$$s_{I^mJ^n} =  s_Im^2 + s_Jn^2 + (s_{IJ} - s_I - s_J)mn.$$
It is clear that $a_{I^mJ^n} =  a_Im + a_Jn$ and $b_{I^mJ^n} =  b_Im + b_Jn$.
By Lemma \ref{product number}, 
$l_{I^mJ^n} = (l_J-1)m + (l_I-1)n + 1$.  Hence the assertion follows from Lemma \ref{colength}.
\end{proof} 

From Theorem \ref{Bhattacharya} we obtain the following formulas for the mixed multiplicities of $I,J$: 
$e_{2,0} =  2s_I,\ e_{0,2} = 2s_J,\ e_{1,1} = s_{IJ} - s_I - s_J.$ 
These formulas can be also deduced from a more general result of Teissier \cite[Corollary 8.7]{Te}. \par

If $(R,\mm)$ is a two-dimensional regular local ring, Verma \cite[Corollary 3.5]{Ve} showed that 
\begin{align*}
\ell(R/I^mJ^n)  =  &\; e(I){m \choose 2} + e(J){n \choose 2} + \big(\ell(R/IJ) - \ell(R/I) - \ell(R/I)\big)mn\\
& + \ell(R/I)m + \ell(R/J)n
\end{align*}
for all $m,n \ge 0$, where $e(I)$ and $e(J)$ denote the multiplicities of $I$ and $J$. To  prove this result he
used the theory of joint reductions of complete ideals and Lipman's formula $e_{1,1} = \ell(R/IJ) - \ell(R/I) - \ell(R/I)$ \cite{Li}.
By Teissier's result one has $e(I) = 2s_I$ and $e(J) = 2s_J$. If we use Lemma \ref{colength} and Lemma \ref{product number} 
to compute  $\ell(R/I),\ell(R/J)$ and $\ell(R/IJ)$ we can also recover Theorem \ref{Bhattacharya} from Verma's result. 
\par

As we have seen in the previous sections, the coefficients of the Bhattacharya polynomial in Theorem \ref{Bhattacharya} can be written down explicitly if we know the vertices of the Newton polyhedra of $I,J$ or the decompositions of $I,J$ as products of integral closures of complete intersections. To see that we consider the case $J = \mm$.

\begin{Theorem}   \label{maximal ideal}
Let $I = J_1\cdots J_s$, where $J_i = \overline{(x^{c_i},y^{d_i})}$ for some positive integers $c_i, d_i$, $i = 1,...,r$.
Assume that $d_1/c_1 \ge \cdots \ge d_r/c_r$. Let $s = \max\{i|\  d_i/c_i \ge 1\}$, 
where $s = 0$ if $d_1/c_1 < 1$. For all $m,n \ge 0$,
\begin{align*}
\ell(R/I^m\mm^n) = &\ \big(\sum_{i=1}^r\frac{c_id_i}{2} + \sum_{1 \le i < j \le r}c_id_j\big)m^2 + \frac{n^2}{2} + \big(\sum_{i=1}^sc_i + \sum_{j=s+1}^rd_j\big)mn \\
& + \frac{1}{2} \sum_{i=1}^r\big(c_i +d_i-\gcd(c_i,d_i)\big)m + \frac{n}{2}.
\end{align*}
\end{Theorem}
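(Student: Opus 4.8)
The plan is to apply Theorem~\ref{Bhattacharya} with $J = \mm$ and then substitute the explicit combinatorial invariants for both $I$ and $\mm$. First I would record the data for $\mm = \overline{(x,y)} = (x,y)$ directly: here $a_\mm = b_\mm = 1$, the Newton boundary $B(\mm)$ is the segment from $(1,0)$ to $(0,1)$ so $l_\mm = 2$, and $\overline{P(\mm)}$ is the triangle with vertices $(0,0),(1,0),(0,1)$, giving $s_\mm = V(\overline{P(\mm)}) = 1/2$. Plugging these into the $n^2$ term and the linear-in-$n$ term of Theorem~\ref{Bhattacharya} immediately yields $s_\mm n^2 = \tfrac{n^2}{2}$ and $\tfrac12(a_\mm+b_\mm-l_\mm+1)n = \tfrac12(1+1-2+1)n = \tfrac{n}{2}$, matching the claimed expression.

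Next I would supply the data for $I$. Since $I = J_1\cdots J_r$ with $J_i = \overline{(x^{c_i},y^{d_i})}$ and $d_1/c_1 \ge \cdots \ge d_r/c_r$, Lemma~\ref{area} gives
\[
s_I = \sum_{i=1}^r\frac{c_id_i}{2} + \sum_{1 \le i < j \le r}c_id_j,
\]
which is exactly the coefficient of $m^2$ in the target formula. For the linear-in-$m$ term I would use $a_I = c_1+\cdots+c_r$, $b_I = d_1+\cdots+d_r$ (from Remark~\ref{option}) together with Lemma~\ref{boundary}, which gives $l_I = \sum_{i=1}^r \gcd(c_i,d_i) + 1$. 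Then
\[
\tfrac12(a_I+b_I-l_I+1) = \tfrac12\Big(\sum_{i=1}^r c_i + \sum_{i=1}^r d_i - \sum_{i=1}^r\gcd(c_i,d_i)\Big) = \tfrac12\sum_{i=1}^r\big(c_i+d_i-\gcd(c_i,d_i)\big),
\]
again matching the claim.

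The one genuinely nontrivial term is the mixed coefficient $s_{I\mm} - s_I - s_\mm$ of $mn$, and this is where I would focus the real work. The natural route is to compute $s_{I\mm}$ via Lemma~\ref{area} applied to a decomposition of $I\mm$ into integral closures of complete intersections. Since $\mm = \overline{(x,y)}$ corresponds to slope $1$, multiplying $I$ by $\mm$ inserts one extra factor $\overline{(x^1,y^1)}$ into the list $J_1,\dots,J_r$, and this factor must be placed in the ordered sequence of slopes at the position determined by $s = \max\{i : d_i/c_i \ge 1\}$ (so among the factors of slope $\ge 1$ but before those of slope $<1$). I expect the main obstacle to be bookkeeping: writing $s_{I\mm}$ from Lemma~\ref{area} for the enlarged ordered list and then carefully cancelling against $s_I$ and $s_\mm = 1/2$. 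The new self-term $\tfrac{1\cdot 1}{2}$ cancels $s_\mm$, and the cross terms $c_i d_j$ involving the inserted factor of slope $1$ collapse: the inserted factor contributes $\sum_{i\le s} c_i \cdot 1$ from the pairs where it is the later factor (slope $\le$ those before it) and $\sum_{j>s} 1 \cdot d_j$ from the pairs where it is the earlier factor. This gives precisely $s_{I\mm}-s_I-s_\mm = \sum_{i=1}^s c_i + \sum_{j=s+1}^r d_j$, the stated coefficient. I would verify the slope-ordering placement and the edge cases $s=0$ and $s=r$ explicitly, since the asymmetry in which side contributes $c_i$ versus $d_j$ hinges on exactly where slope $1$ sits. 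Assembling all five coefficients then completes the proof.
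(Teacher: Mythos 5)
Your proposal is correct and follows essentially the same route as the paper: substitute $J=\mm$ into Theorem~\ref{Bhattacharya}, read off $s_I$, $a_I$, $b_I$, $l_I$ from Lemma~\ref{area}, Remark~\ref{option}, and Lemma~\ref{boundary}, note $s_\mm=1/2$, $a_\mm=b_\mm=1$, $l_\mm=2$, and compute $s_{I\mm}$ by applying Lemma~\ref{area} to the list with $\overline{(x,y)}$ inserted at position $s+1$ in the slope ordering, which is exactly how the paper obtains $s_{I\mm}-s_I-s_\mm=\sum_{i=1}^s c_i+\sum_{j=s+1}^r d_j$. Your accounting of which cross terms contribute $c_i$ versus $d_j$ matches the paper's computation precisely.
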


\begin{proof}
By Lemma \ref{area}, $s_I$ is the coefficient of $m^2$ in the right hand side of  the above formula.
It is clear that  $s_\mm = 1/2$. Since  $d_1/c_1 \ge \cdots \ge d_s/c_s \ge 1/1 > d_{s+1}/c_{s+1} \ge \cdots \ge d_r/c_r$, applying Lemma \ref{area} to the ideal $I \mm$ we get
$$s_{I\mm} = \sum_{i=1}^r\frac{c_id_i}{2} + \frac{1}{2} + \sum_{1 \le i < j \le r}c_id_j + \sum_{i=1}^sc_i + \sum_{j=s+1}^rd_j = s_I + \frac{1}{2} + \sum_{i=1}^sc_i + \sum_{j=s+1}^rd_j.$$ 
By definition, $a_I = \sum_{i=1}^r c_i, b_I = \sum_{i=1}^r d_i$, 
and by Lemma \ref{boundary}, $l_I = \sum_{i=1}^r\gcd(c_i,d_i) + 1.$ 
Moreover,  $a_\mm = b_\mm = 1$, $l_\mm = 2$. Putting these data into Theorem \ref{Bhattacharya} we obtain the assertion.
\end{proof} 

Theorem \ref{maximal ideal} contains explicit formulas for the Hilbert-function of the associated graded ring 
$G(I) = \oplus_{m \ge 0}I^m/I^{m+1}$ and the fiber ring $F(I) = \oplus_{m \ge 0}I^m/\mm I^m$.

\begin{Corollary}   \label{associated}
Let $I = J_1\cdots J_s$, where $J_i = \overline{(x^{c_i},y^{d_i})}$ for some positive integers $c_i, d_i$, $i = 1,...,r$.
Assume that $d_1/c_1\ge \cdots \ge d_r/c_r$. For all $m\ge 0$, 
$$
\ell(I^m/I^{m+1}) = \big(\sum_{i=1}^r\frac{c_id_i}{2} + \sum_{1 \le i < j \le r}c_id_j\big)(2m+1)  
+ \frac{1}{2} \sum_{i=1}^r\big(c_i +d_i-\gcd(c_i,d_i) \big).
$$
\end{Corollary}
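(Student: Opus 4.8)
The plan is to reduce the statement to a telescoping difference of colengths and then to quote the closed form of Theorem \ref{maximal ideal}. Concretely, the short exact sequence
$$0 \to I^m/I^{m+1} \to R/I^{m+1} \to R/I^m \to 0$$
gives $\ell(I^m/I^{m+1}) = \ell(R/I^{m+1}) - \ell(R/I^m)$, so it suffices to have a closed formula for $\ell(R/I^m)$. I would obtain this by specializing the Bhattacharya-type formula of Theorem \ref{maximal ideal} at $n = 0$: every term carrying a factor of $n$ (the mixed term in $mn$, as well as the purely-$n$ contributions $n^2/2$ and $n/2$) vanishes, so the auxiliary index $s = \max\{i : d_i/c_i \ge 1\}$ never enters, and no case distinction on the slopes is required.

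Carrying this out, setting $n = 0$ leaves
$$\ell(R/I^m) = \Big(\sum_{i=1}^r\frac{c_id_i}{2} + \sum_{1 \le i < j \le r}c_id_j\Big)m^2 + \frac{1}{2}\sum_{i=1}^r\big(c_i+d_i-\gcd(c_i,d_i)\big)m,$$
an \emph{exact} equality for all $m \ge 0$ since Theorem \ref{maximal ideal} holds for all $m,n \ge 0$. Writing $A := \sum_{i=1}^r\frac{c_id_i}{2} + \sum_{1 \le i < j \le r}c_id_j$ and $B := \frac{1}{2}\sum_{i=1}^r\big(c_i+d_i-\gcd(c_i,d_i)\big)$, so that $\ell(R/I^m) = Am^2 + Bm$, the telescoping difference becomes
$$\ell(I^m/I^{m+1}) = A\big((m+1)^2 - m^2\big) + B = A(2m+1) + B,$$
which is exactly the claimed expression. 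As a consistency check, at $m = 0$ both sides equal $\ell(R/I) = A + B$, in agreement with Lemma \ref{colength}.

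I do not expect a genuine obstacle here: the entire arithmetic content is supplied by Theorem \ref{maximal ideal}, and the corollary is a one-line difference once $\ell(R/I^m)$ is in hand. The only point deserving a moment of care is that the quadratic expression for $\ell(R/I^m)$ must be treated as a true equality for \emph{every} $m \ge 0$, not merely as an asymptotic identity; this is precisely what the ``for all $m, n \ge 0$'' assertion of Theorem \ref{maximal ideal} guarantees, so the telescoping is valid all the way down to $m = 0$.
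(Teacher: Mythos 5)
Your proposal is correct and is exactly the paper's argument: the paper's proof also writes $\ell(I^m/I^{m+1}) = \ell(R/I^{m+1}) - \ell(R/I^m)$ and then sets $n=0$ in Theorem \ref{maximal ideal}; you have simply spelled out the telescoping arithmetic that the paper leaves implicit.
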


\begin{proof}
Since $\ell(I^m/I^{m+1}) =  \ell(R/I^{m+1}) - \ell(R/I^m)$, 
the assertion follows from Theorem \ref{maximal ideal} by putting $n = 0$.
\end{proof} 

\begin{Corollary}   \label{fiber}
Let $I = J_1\cdots J_s$, where $J_i = \overline{(x^{c_i},y^{d_i})}$ for some positive integers $c_i, d_i$, $i = 1,...,r$.
Assume that $d_1/c_1 \ge \cdots \ge d_r/c_r$. Let $s = \max\{i|\  d_i/c_i \ge 1\}$,
where $s = 0$ if $d_1/c_1 < 1$.  For all $m\ge 0$, 
$$\ell(I^m/\mm I^m) = \big(\sum_{i=1}^sc_i + \sum_{j=s+1}^rd_j\big)m + 1.$$
\end{Corollary}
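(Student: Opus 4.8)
The plan is to reduce the fiber length $\ell(I^m/\mm I^m)$ to a difference of two specializations of the Bhattacharya function already computed in Theorem \ref{maximal ideal}, in the same spirit that Corollary \ref{associated} was deduced by setting $n=0$. The starting observation is the chain of ideals $\mm I^m \subseteq I^m \subseteq R$, which gives a short exact sequence of finite-length $R$-modules
$$0 \to I^m/\mm I^m \to R/\mm I^m \to R/I^m \to 0.$$
By additivity of length this yields the identity
$$\ell(I^m/\mm I^m) = \ell(R/\mm I^m) - \ell(R/I^m),$$
which is the only structural input needed; everything after this is arithmetic.

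Next I would identify each term on the right as a value of the polynomial in Theorem \ref{maximal ideal}. Since $\mm I^m = I^m\mm^1$ and $I^m = I^m\mm^0$, the first term is the specialization at $n=1$ and the second is the specialization at $n=0$ of $\ell(R/I^m\mm^n)$. Writing the theorem's formula as $s_I m^2 + \tfrac{n^2}{2} + B\,mn + C\,m + \tfrac{n}{2}$ with $B = \sum_{i=1}^s c_i + \sum_{j=s+1}^r d_j$ and $C = \tfrac12\sum_{i=1}^r(c_i+d_i-\gcd(c_i,d_i))$, the specialization $n=0$ leaves $s_I m^2 + C\,m$, while $n=1$ gives $s_I m^2 + (B+C)m + 1$, the constant $1$ coming from the two contributions $\tfrac{n^2}{2}$ and $\tfrac{n}{2}$ at $n=1$. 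Subtracting, the $m^2$ term and the linear term carrying the $\gcd$ coefficients $C$ cancel identically, leaving
$$\ell(I^m/\mm I^m) = \Big(\sum_{i=1}^s c_i + \sum_{j=s+1}^r d_j\Big)m + 1,$$
which is exactly the assertion.

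The computation is mechanical once the two specializations are written down, so there is no serious obstacle; the only point requiring attention is bookkeeping. One must keep the hypotheses of Theorem \ref{maximal ideal} in force, namely the ordering $d_1/c_1 \ge \cdots \ge d_r/c_r$ and the cutoff $s = \max\{i \mid d_i/c_i \ge 1\}$, precisely because these enter through the coefficient $B = \sum_{i=1}^s c_i + \sum_{j=s+1}^r d_j$ of the $mn$-term, and it is this coefficient that survives the subtraction as the leading coefficient of the answer. Finally, I would note that validity for all $m \ge 0$ (not merely $m \gg 0$) is inherited directly from Theorem \ref{maximal ideal}, which already holds for all $m,n \ge 0$, so no passage from the asymptotic regime to all $m$ is required.
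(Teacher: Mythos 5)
Your proposal is correct and follows exactly the paper's own argument: write $\ell(I^m/\mm I^m) = \ell(R/\mm I^m) - \ell(R/I^m)$ and subtract the $n=0$ specialization of Theorem \ref{maximal ideal} from the $n=1$ specialization. The arithmetic checks out, so there is nothing to add.
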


\begin{proof}
Since $\ell(I^m/\mm I^m) = \ell (R/\mm I^m) - \ell(R/I^m)$, 
the assertion follows from Theorem \ref{maximal ideal} by putting $n = 0,1$.
\end{proof} 

In particular, the case $m=1$ of Corollary \ref{fiber} yields 
the following formula for the minimal number of generators $v(I)$ of the ideal $I$.

\begin{Corollary}   \label{generator}
Let $I$ be a complete monomial ideal as above. Then 
$$v(I) =  \sum_{i=1}^sc_i + \sum_{j=s+1}^rd_j + 1.$$
\end{Corollary}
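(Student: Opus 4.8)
The plan is to recognize that the minimal number of generators $v(I)$ is simply a special value of the fiber-ring length function already computed in Corollary~\ref{fiber}. First I would invoke the graded version of Nakayama's lemma: since $I$ is a homogeneous ideal in $R = k[x,y]$ and $\mm$ is the maximal homogeneous ideal, a set of homogeneous elements of $I$ minimally generates $I$ as an $R$-module if and only if their images form a $k$-basis of the quotient $I/\mm I$. Consequently $v(I) = \dim_k(I/\mm I)$, and because $I/\mm I$ is a finite-dimensional $k$-vector space, this dimension coincides with its length, so $v(I) = \ell(I/\mm I)$.

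With this identification in hand, I would specialize Corollary~\ref{fiber} to $m = 1$. That corollary asserts $\ell(I^m/\mm I^m) = \big(\sum_{i=1}^s c_i + \sum_{j=s+1}^r d_j\big)m + 1$, valid under exactly the hypotheses inherited here (the decomposition $I = J_1 \cdots J_r$ with $J_i = \overline{(x^{c_i},y^{d_i})}$, the ordering $d_1/c_1 \ge \cdots \ge d_r/c_r$, and the cutoff $s = \max\{i \mid d_i/c_i \ge 1\}$, with $s = 0$ when $d_1/c_1 < 1$). Setting $m = 1$ gives $\ell(I/\mm I) = \sum_{i=1}^s c_i + \sum_{j=s+1}^r d_j + 1$, and combining this with $v(I) = \ell(I/\mm I)$ from the previous step yields the claimed formula.

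There is essentially no obstacle to overcome: all the combinatorial work has already been carried out in Corollary~\ref{fiber}, and the entire content of this corollary is the translation between the module-theoretic invariant $v(I)$ and the colength $\ell(I/\mm I)$ via Nakayama, followed by a single substitution. The only point I would be careful to state explicitly is that no renormalization of the decomposition is needed, since the standing assumptions on the $c_i, d_i$ and on $s$ are precisely those carried over verbatim from the hypotheses of the present statement.
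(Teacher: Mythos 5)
Your proposal is correct and follows the paper's own route exactly: the paper derives this corollary as the case $m=1$ of Corollary \ref{fiber}, with the identification $v(I) = \ell(I/\mm I)$ via graded Nakayama left implicit. You have merely spelled out that identification, which is a welcome clarification but not a different argument.
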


We can also apply Theorem \ref{Bhattacharya} to study the function $\ell(I^mJ^n/I^{m+1}J^n)$ 
for an $\mm$-primary  complete monomial ideal $I$ and an arbitrary complete monomial ideal $J$ in $R$. 
This follows from the fact that $J = fJ'$ for a monomial $f$ and an $\mm$-primary complete monomial ideal $J'$. Hence 
$$\ell(I^mJ^n/I^{m+1}J^n) = \ell(I^m(J')^n/I^{m+1}(J')^n) = \ell(R/I^{m+1}(J')^n) - \ell(R/I^m(J')^n).$$

\end{document}